\numberwithin{equation}{section}
\theoremstyle{plain}
\newtheorem{thm}{Theorem}[section]
\newtheorem{lem}[thm]{Lemma}
\newtheorem{cor}[thm]{Corollary}
\theoremstyle{definition}
\newtheorem{question}[thm]{Question}
\newcommand{\kk}{\Bbbk}
\theoremstyle{remark}
\newcommand*{\Hom}{\ensuremath{\text{\upshape Hom}}}
\newcommand*{\Ext}{\ensuremath{\text{\upshape Ext}}}
\newcommand*{\End}{\ensuremath{\text{\upshape End}}}
\newcommand*{\Aut}{\ensuremath{\text{\upshape Aut}}}
\def\dim{\operatorname{dim}}
\def\id{\operatorname{id}}
\def\ord{\operatorname{ord}}
\def\Tr{\operatorname{tr}}
\def\Irr{\operatorname{Irr}}
\def\o{\otimes}
\DeclarePairedDelimiterX\set[1]\lbrace\rbrace{#1}
\begin{document}
\thispagestyle{empty}

\title{Hopf algebras of prime dimension in positive characteristic}

\author{Siu-Hung Ng}
\author{Xingting Wang}

\address{Department of Mathematics\\
Louisiana State University\\
Baton Rouge, LA 70803-4918, USA}
\email{rng@math.lsu.edu}
\thanks{The first author was partially supported by the NSF}
\address{Department of Mathematics\\
Howard University\\
Washington, DC 20059, USA}
\email{xingting.wang@howard.edu}
%\thanks{}

\keywords{Hopf algebra; Prime dimension; Classification; Positive characteristic}

\subjclass[2010]{16T05, 17B60}

\begin{abstract}
We prove that a Hopf algebra of prime dimension $p$ over an algebraically closed field, whose characteristic is equal to $p$, is either a group algebra or a restricted universal enveloping algebra. Moreover, we show that any Hopf algebra of prime dimension $p$ over a field of characteristic $q>0$ is commutative and cocommutative when $q=2$ or $p<4q$. This problem remains open in positive characteristic when $2<q<p/4$.
\end{abstract}

\maketitle

\section*{Introduction}
The purpose of this paper is to classify Hopf algebras of prime dimension in positive characteristic under the assumption that the characteristic of the base field equals the dimension of the Hopf algebra. In characteristic zero, possible Hopf structure in prime dimension was first conjectured by Kaplansky \cite{Kap}, and later proved by Zhu \cite{Zhu}, which can be summarized as the following celebrated theorem:
\begin{thm}\label{T:Z}
A Hopf algebra of prime dimension over an algebraically closed field of characteristic zero is a group algebra.
\end{thm}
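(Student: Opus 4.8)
The plan is to show that $H$ or its dual $H^*$ contains $p$ distinct grouplike elements, which suffices for the following reason: if $H$ has grouplikes $G(H)$, then $k[G(H)]$ is a Hopf subalgebra, so by the Nichols--Zoeller freeness theorem $|G(H)|$ divides $\dim H = p$, and $|G(H)| = p$ forces $k[G(H)] = H \cong k[\mathbb{Z}/p]$; the same applies to $H^*$, and $H^* \cong k[\mathbb{Z}/p]$ gives $H \cong k[\mathbb{Z}/p]$ by dualizing. Hence it is enough to produce a single nontrivial grouplike in $H$ or in $H^*$, since its order divides $p$ and therefore equals $p$. The two structural inputs I will use are Nichols--Zoeller and the Larson--Radford theorem that, in characteristic zero, $H$ is semisimple if and only if it is cosemisimple if and only if $S^2 = \mathrm{id}$.

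The first and hardest step is to prove that $H$ is semisimple. By Larson--Radford this is equivalent to $\mathrm{Tr}(S^2) \neq 0$, so I would constrain this trace. Radford's formula for $S^4$ shows that $S$ has finite order, so $S^2$ is diagonalizable with roots-of-unity eigenvalues and $\mathrm{Tr}(S^2)$ is a sum of $\dim H = p$ roots of unity; I would first argue, by a Galois-descent argument, that it is a rational integer. The heart of the matter is to exclude the value $0$, and this is where primality must enter: combining Radford's trace formula---which expresses $\mathrm{Tr}(S^2)$ through the integrals of $H$ and $H^*$ together with their distinguished grouplikes---with the fact that any grouplike of $H$ or $H^*$ has order dividing $p$, I would try to pin down $\mathrm{Tr}(S^2)$ modulo $p$ and show it cannot vanish. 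I expect this arithmetic input to be the main obstacle; that some restriction on the dimension is unavoidable is illustrated by Sweedler's four-dimensional Hopf algebra, which is not semisimple and for which $\mathrm{Tr}(S^2) = 0$.

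Once $H$ is semisimple, so is $H^*$, and the finish mirrors the elementary proof that a group of prime order is cyclic. I would apply the class equation of Kac and Zhu to the semisimple Hopf algebra $H^*$ acting on itself by the adjoint action: its nontrivial conjugacy classes have sizes $d_i > 1$, and Kac's theorem guarantees that each $d_i$ divides $\dim H^* = p$. As $p$ is prime this forces $d_i = p$, which is impossible, since the trivial class has size $1$ and all class sizes sum to $p$; hence every class is a singleton, the adjoint action is trivial, and $H^*$ is cocommutative. A finite-dimensional cocommutative Hopf algebra over an algebraically closed field of characteristic zero is the group algebra of its grouplikes, so $H^* \cong k[G(H^*)]$ with $|G(H^*)| = \dim H^* = p$. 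Dualizing, and using that $k$ contains $p$-th roots of unity, yields $H \cong k^{\mathbb{Z}/p} \cong k[\mathbb{Z}/p]$, as claimed.
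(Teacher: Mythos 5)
First, a framing remark: the paper does not prove Theorem~\ref{T:Z} at all --- it is quoted as Zhu's theorem \cite{Zhu} --- so your proposal can only be measured against Zhu's known argument, whose key mechanism this paper does in fact reuse (in characteristic $q$) in Lemma~\ref{L:3} and Theorem~\ref{P:M}. Measured that way, your proposal has a genuine gap exactly where you suspect it does: the semisimplicity of $H$. What you offer there is a plan, not a proof, and the plan points in a direction that is not known to work. Primality does not enter through a congruence for $\mathrm{tr}(S^2)$ modulo $p$, and there is no known way to ``pin down $\mathrm{tr}(S^2)$ mod $p$'' from Radford's formulas. The argument that works runs in the opposite logical direction and uses different arithmetic. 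Step one: if $G(H)$ or $G(H^*)$ is nontrivial, Nichols--Zoeller \cite{NZ} finishes the proof outright (this is where primality enters, as in your own reduction). Step two: otherwise both distinguished group-likes are trivial, and Radford's $S^4$-formula \cite{Rad} then gives the exact identity $S^4=\id$ --- note that your weaker statement ``$S$ has finite order'' is not enough; you need the eigenvalues of $S^2$ to be exactly $\pm 1$, and for that the triviality of the group-likes is essential. Step three: suppose $H$ is \emph{not} semisimple; then $\varepsilon(\Lambda)=0$ for an integral $\Lambda$, and Radford's trace results \cite{Rad2} force $\mathrm{tr}(S^2)=0$. But $\mathrm{tr}(S^2)=d_+-d_-$, where $d_\pm$ are the multiplicities of the eigenvalues $\pm1$ of $S^2$ and $d_++d_-=p$; since $p$ is odd, this trace is an odd integer, hence nonzero --- contradiction ($p=2$ is checked by hand). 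So one \emph{assumes} non-semisimplicity and derives $\mathrm{tr}(S^2)=0$, then contradicts parity; the relevant arithmetic is ``$p$ is odd,'' not a computation mod $p$. This is precisely the parity trick that Lemma~\ref{L:3} and Theorem~\ref{P:M} of this paper transplant to characteristic $q$.

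The rest of your proposal is sound and matches Zhu's structure. The reduction via Nichols--Zoeller is correct, and the endgame for semisimple $H$ --- the Kac--Zhu class equation, with all class sizes dividing $p$ and summing to $p$ with the trivial class of size $1$, forcing every class to be a singleton and hence (co)commutativity --- is essentially Zhu's Theorem 1 applied exactly as he applied it (you do need Larson--Radford to know $H$ is also cosemisimple so that the class equation is available, but that is automatic in characteristic zero). So the skeleton of your proof is right, but the load-bearing step is missing: without the group-like dichotomy producing $S^4=\id$ and the parity argument on $\mathrm{tr}(S^2)$, the claim that $H$ is semisimple remains unproved, and your proposed substitute (Galois descent plus a congruence mod $p$ for a sum of roots of unity) is not a viable route as far as anyone knows.
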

We are interested in the classification of Hopf algebras of prime dimension in positive characteristic.
\begin{question}\label{Q}
Let $H$ be a Hopf algebra of prime dimension $p$ over an algebraically closed field of positive characteristic $q$. Are the following true?
\begin{enumerate}
\item When $q=p$, $H$ is either a group algebra or a restricted universal enveloping algebra.
\item When $q\neq p$, $H$ is a group algebra.
\end{enumerate}
\end{question}
It is worthy mentioning that, by using the famous Lifting Theorem established by Etingof and Gelaki in \cite{EGelaki}, the answer to Question \ref{Q}(2) is affirmative when $q>p$ (cf. \cite[Corollary 3.5]{EGelaki}). While searching for general approaches, we discover that some of the techniques developed in \cite{EG2, Ng}, which was used in characteristic zero to study Hopf algebras of dimension being a product of two distinct primes, can be adapted in positive characteristic for Question \ref{Q}. As an application, we are able to provide an affirmative answer to Question \ref{Q} in the case when $q=2$ or $p<4q$, and narrow the problem down to the situation when $2<q<p/4$.

This paper is organized as follows. We will review and prove some representation theoretic results for finite-dimensional Hopf algebras in Section 1. The readers are referred to \cite{Mo} for more details on some basic definitions and results for Hopf algebras. We present our main theorems in Section 2.
\newline

\section{Preliminary}\label{S:1}
Let $\kk$ be a ground field of arbitrary characteristic. We use $H$ to denote a finite-dimensional Hopf algebra over $\kk$ with comultiplication $\Delta: H\to H\otimes H$ and antipode $S: H\to H$. Its dual vector space $H^*$ has a Hopf algebra structure induced from that of $H$.

By an $H$-module we always mean a left module over $H$ as an associated algebra. For $H$-modules $V$ and $W$, we give the tensor product $V\otimes W$ an action of $H$ via $\Delta$. The \emph{left dual} $V^*$ of an $H$-module $V$ is the left $H$-module with the underlying space $V^*=\Hom_\kk(V ,\kk)$ and the $H$-action given by
\[
(hf)(v)=f\left(S(h)v\right),\quad  \text{for}\ h\in H, \ f\in V^*,\ v\in V.
\]
We can twist the action of an $H$-module $V$ by the square of the antipode to obtain another $H$-module $\!_{S^2}V$. More precisely, $\!_{S^2}V=V$ as vector spaces with the $S^2$-twisted action given by
\begin{align}\label{Twist}
h\cdot v=S^2(h)v,\quad \text{for}\  h\in H,\ v\in V.
\end{align}
One sees easily that $\!_{S^2}V \cong V^{**}$ as $H$-modules via the natural isomorphism $j: V \to V^{**}$ of vector spaces. The category of finite-dimensional $H$-modules is a \emph{finite tensor category} in the sense of \cite{EO}.

The following result was observed in the proofs of \cite[Theorem 2.3(b)]{Lorenz2} and \cite[Lemma 2.11]{EG2}, whose categorical explanation can be found in \cite[Theorem 2.16]{EO}. The same result has been explicitly stated in \cite[Lemma 1.3]{Ng} as well. We include its proof for the sake of completeness with emphasis on the ubiquitousness of the approach regardless of the characteristic of the base field $\kk$.

\begin{lem}\label{L:1}
Let $V$ be a projective module over a nonsemisimple Hopf algebra $H$. Then the trace of any $H$-module map $\phi: V\to {\!_{S^2}}V$ is zero.
\end{lem}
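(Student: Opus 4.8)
The plan is to interpret the ordinary (vector-space) trace of $\phi$ as a categorical trace in the finite tensor category of finite-dimensional $H$-modules, and then to exploit that the trivial module $\kk$ fails to be projective precisely because $H$ is nonsemisimple. Using the natural isomorphism ${}_{S^2}V\cong V^{**}$, I would first regard $\phi$ as an $H$-module map $\widetilde\phi\colon V\to V^{**}$ and form the scalar
\[
t_\phi=\big(\kk\xrightarrow{\,\mathrm{coev}_V\,}V\o V^*\xrightarrow{\,\widetilde\phi\o\id_{V^*}\,}V^{**}\o V^*\xrightarrow{\,\mathrm{ev}_{V^*}\,}\kk\big)\in\End_H(\kk)=\kk,
\]
built from the rigid structure. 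The first key point is that, after applying the forgetful functor to vector spaces (which is tensor and carries the $H$-linear evaluation and coevaluation to the ordinary ones), this composite evaluates to $\sum_i \langle \widetilde\phi(v_i),v^i\rangle=\Tr(\phi)$ for dual bases $\{v_i\}$, $\{v^i\}$. Thus it suffices to prove $t_\phi=0$.

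By construction $t_\phi$ factors through the object $V\o V^*$. The second step is to observe that $V\o V^*$ is projective: since $V$ is a direct summand of a free module $H^{\oplus n}$, the module $V\o V^*$ is a summand of $H^{\oplus n}\o V^*$, and the tensor identity $H\o W\cong H\o W_{\mathrm{triv}}$ (the $H$-linear isomorphism $h\o w\mapsto h_{(1)}\o S(h_{(2)})w$) shows each $H\o V^*$ is free. Hence $V\o V^*$ is a summand of a free module, i.e. projective; in other words projectives form a tensor ideal.

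Finally, suppose $t_\phi\neq 0$. Since $\End_H(\kk)=\kk$, the nonzero scalar $t_\phi$ is invertible, so after rescaling the second leg of the factorization we exhibit $\id_{\kk}$ as a composite $\kk\to V\o V^*\to\kk$. This realizes the trivial module $\kk$ as a direct summand of the projective module $V\o V^*$, forcing $\kk$ to be projective. But the counit surjection $\e\colon H\to\kk$ would then split, producing a left integral $\Lambda$ with $\e(\Lambda)\neq 0$; by Maschke's theorem this makes $H$ semisimple, contradicting the hypothesis. Therefore $t_\phi=\Tr(\phi)=0$.

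The step I expect to be the main obstacle is the first one: pinning down that the $H$-linear categorical trace $t_\phi$ really computes the plain linear trace $\Tr(\phi)$, which requires keeping careful track of the identification ${}_{S^2}V\cong V^{**}$ and the compatibility of the rigid duality with the forgetful functor. Everything else is formal once the tensor-ideal property and Maschke's criterion (valid in any characteristic) are in place; alternatively, one can bypass the categorical language by first reducing to the regular representation $V=H$ — where $\phi(h)=S^2(h)\phi(1)$, so that $\Tr(\phi)=\Tr(R_{\phi(1)}\circ S^2)$ — and then identifying this twisted trace functional with an integral of $H^*$ that vanishes in the nonsemisimple case.
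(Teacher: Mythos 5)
Your proof is correct and follows essentially the same route as the paper's: you factor the linear trace of $\phi$ as the categorical composite $\kk\to V\o V^*\to V^{**}\o V^*\to\kk$, use the tensor identity (the fundamental theorem of Hopf modules) to see that $V\o V^*$ is projective, and derive a contradiction by splitting off $\kk$ as a projective summand, which forces semisimplicity. The only difference is cosmetic: you spell out explicitly the two facts the paper delegates to citations, namely the freeness of $H\o W$ via $h\o w\mapsto h_{(1)}\o S(h_{(2)})w$ and the Maschke-type argument that a splitting of the counit yields an integral $\Lambda$ with $\e(\Lambda)\ne 0$.
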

\begin{proof}
First we observe that the evaluation map $\text{ev}: V^*\otimes V\to \kk$ and the dual basis map $\text{db}: \kk\to V\otimes V^*$ given by $\text{db}(1)=\sum_{i} v_i\otimes v^i$ are all $H$-module maps, where $\{v_i\}$ is a basis for $V$ and $\{v^i\}$ is the dual basis for $V^*$. Next, the composition of $H$-module maps
\[
\xymatrix{
\kk\ar[r]^-{\text{db}}  & V\otimes V^* \ar[r]^-{\phi\otimes \id}  &  {\!_{S^2}}V\otimes V^* \ar[r]^-{j\otimes \id}  & V^{**}\otimes V^*\ar[r]^-{\text{ev}} &
\kk
}
\]
is a scalar multiplication given by $\text{ev}\circ (\phi\otimes \id)\circ \text{db}(1)=\sum_{i} v^i(\phi(v_i))=\text{tr}(\phi)$. Suppose $\text{tr}(\phi)\neq 0$. Without loss of generality we can take $\text{tr}(\phi)=1$. Then $\kk$ becomes a direct summand of $V\otimes V^*$ via the embedding $\text{db}$ and the projection $\text{ev}\circ (\phi\otimes \id)$. Inasmuch as $V\otimes V^*$ is projective, by the fundamental theorem of Hopf modules \cite[Theorem 1.9.4]{Mo}, we conclude that $\kk$ is projective as well. This forces $H$ to be semisimple (cf. \cite[\S 2.4]{Lorenz}), which is a contradiction to our assumption. Hence $\text{tr}(\phi)=0$.
\end{proof}

Recall that an $H$-module $V$ is \emph{indecomposable} in case it is nonzero and has no nontrivial direct summands. We say a ring is \emph{local} if it has a unique maximal one-sided ideal. If $V$ is indecomposable, then the endomorphism ring $\End_H(V)$ is local (cf. \cite{AF,P}). Also note that a finite-dimensional Hopf algebra $H$ is local if and only if the dual Hopf algebra $H^*$ is connected \cite[Remark 5.1.7]{Mo}.

From now on, we assume $\kk$ is algebraically closed, and denote by $\text{Irr}(H)$ a complete set of nonisomorphic simple $H$-modules. For any $V\in \text{Irr}(H)$, the \emph{projective cover} of $V$ is denoted by $P(V)$. Since $H$ is a Frobenius algebra, we have (cf. \cite[61.13]{CR})
\begin{equation}\label{Dec}
\dim H=\bigoplus_{V\in \text{Irr}(H)} \dim V \cdot \dim P(V).
\end{equation}
For any $H$-module $M$ and $V\in \text{Irr}(H)$, we define $[M:V]$ to be the multiplicity of $V$ appearing as a composition factor of $M$. Let $V,W\in \text{Irr}(H)$. Since $V^*\otimes P(W)$ is projective, it is a direct sum of indecomposable projective $H$-modules. More precisely,

\begin{align}\label{PM}
V^*\otimes P(W)= & \bigoplus_{U\in \mathrm{Irr}(H)} \dim \left(\Hom_H(V^*\otimes P(W),U)\right)\cdot P(U) \nonumber\\
=&\bigoplus_{U\in \mathrm{Irr}(H)} [V\otimes U:W]\cdot P(U).
\end{align}

The idea of the next lemma comes from \cite[Corollary 2.10]{EG2}.

\begin{lem}\label{L:2}
Suppose $\mathrm{ord}\,(S^2)=q^n$ ($n\ge 0$) for some prime $q$ with $q\nmid \dim H$. Then there exists some $V\in \mathrm{Irr}(H)$ satisfying $q\nmid \dim V$, $q\nmid \dim P(V)$ and $V^{**} \cong V$ as $H$-modules.
\end{lem}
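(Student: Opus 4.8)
The plan is to exploit the dimension decomposition \eqref{Dec} together with the observation that the double-dual functor permutes the simple modules in orbits whose sizes are powers of $q$, and then to read off the conclusion from a congruence modulo $q$.

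First I would recall that $\!_{S^2}V\cong V^{**}$, so the assignment $T\colon V\mapsto{}\!_{S^2}V$ defines a $\kk$-linear monoidal auto-equivalence of the finite tensor category of finite-dimensional $H$-modules. Being an auto-equivalence, $T$ permutes the isomorphism classes in $\mathrm{Irr}(H)$, preserves dimensions (since $\!_{S^2}V=V$ as a vector space), and commutes with the formation of projective covers; in particular $P(\!_{S^2}V)\cong P(V)^{**}$, whence $\dim P(\!_{S^2}V)=\dim P(V)$ for every $V$. Because $T$ is literally the same space with the action twisted by $S^2$, its $k$-th iterate twists by $S^{2k}$, so the hypothesis $\mathrm{ord}(S^2)=q^n$ forces $T^{q^n}=\mathrm{id}$ as a functor. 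Hence the permutation $\sigma$ induced by $T$ on the finite set $\mathrm{Irr}(H)$ satisfies $\sigma^{q^n}=\mathrm{id}$, so every $\sigma$-orbit has cardinality a power of $q$, and the fixed points of $\sigma$ are precisely the $V$ with $V^{**}\cong V$.

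Next I would group the terms of \eqref{Dec} according to the orbits of $\sigma$. Within a single orbit all the summands $\dim V\cdot\dim P(V)$ coincide, because $T$ preserves both factors. An orbit of size $q^k$ with $k\ge 1$ therefore contributes $q^k\cdot\dim V\cdot\dim P(V)$, which is divisible by $q$. Reducing \eqref{Dec} modulo $q$ thus annihilates every nontrivial orbit and leaves
\[
\dim H\equiv \sum_{\substack{V\in\mathrm{Irr}(H)\\ V^{**}\cong V}}\dim V\cdot\dim P(V)\pmod{q}.
\]
Since $q\nmid\dim H$ by hypothesis, the right-hand side is nonzero modulo $q$, so at least one fixed point $V$ must satisfy $\dim V\cdot\dim P(V)\not\equiv 0\pmod q$. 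For that $V$ one obtains $q\nmid\dim V$, $q\nmid\dim P(V)$, and $V^{**}\cong V$ simultaneously, which is exactly the assertion.

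I expect the only genuinely delicate point to be the justification that $T$ commutes with projective covers, equivalently that $P(V^{**})\cong P(V)^{**}$ and hence $\dim P(V^{**})=\dim P(V)$; this is where the auto-equivalence property does the real work, since equality of the $\dim P$-values across an orbit is what allows the $q$-divisibility of the orbit size to be transferred to the summands of \eqref{Dec}. Everything else reduces to the elementary counting argument modulo $q$ described above.
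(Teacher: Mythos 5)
Your proposal is correct and is essentially the paper's own argument: the paper lets the cyclic group $\langle S^2\rangle$ of order $q^n$ act on $\mathrm{Irr}(H)$ by twisting, groups the terms of \eqref{Dec} into orbits (whose sizes are powers of $q$ by orbit--stabilizer, matching your $\sigma^{q^n}=\mathrm{id}$ observation), and extracts a fixed point $V$ with $q\nmid \dim V\cdot\dim P(V)$ exactly as you do by reducing modulo $q$. The point you flag as delicate---that twisting preserves $\dim P(V)$ across an orbit---is handled implicitly in the paper by the same fact you invoke, namely that the $S^2$-twist is an (auto-)equivalence of the module category and hence carries projective covers to projective covers.
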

\begin{proof}
It is clear that the finite subgroup $G:=\langle S^2\rangle \subset \Aut(H)$ acts on $\text{Irr}(H)$ by twisting the $H$-module structures as \eqref{Twist}. Denote by $\mathcal O_1,\mathcal O_2,\cdots,\mathcal O_\ell$ the corresponding $G$-orbits in $\text{Irr}(H)$. According to \eqref{Dec}, we get
\begin{align*}
\dim H=\sum_{1\le i\le \ell} |\mathcal O_i| \cdot \dim V_i \cdot \dim P(V_i),
\end{align*}
where $V_i$ is any representative in $\mathcal O_i$. Since $q\nmid \dim H$, there is some index $1\le r\le \ell$ such that $q\nmid |\mathcal O_r|\dim V_r\dim P(V_r)$. Since $|\mathcal O_r|=|G|/|\text{Stab}(V_r)|=q^i$ for some $i\ge 0$, we get $\mathcal O_r=\{V_r\}$. This implies that $V_r\cong \!_{S^2}(V_r)\cong (V_r)^{**}$. So $V_r$ satisfies all the properties in the statement.
\end{proof}

Our last lemma is derived from the proof of \cite[Lemma 2.2(ii) and Corollary 2.3(II)]{Ng}.

\begin{lem}\label{L:3} Let $H$ be a nonsemisimple Hopf algebra over an algebraically closed field $\kk$ of characteristic  $q >2$ with $S^4=\id$. If $V$ is an indecomposable projective $H$-module of odd dimension and $V\cong V^{**}$ as $H$-modules,  then $\dim V\ge q$.
\end{lem}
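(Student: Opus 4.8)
The plan is to extract a distinguished linear operator from the self-duality of $V$ and then play its vanishing trace against a parity constraint. Since the natural map $j$ identifies $\!_{S^2}V$ with $V^{**}$ as $H$-modules, the hypothesis $V\cong V^{**}$ furnishes an $H$-module isomorphism $\psi\colon V\to \!_{S^2}V$. As $H$ is nonsemisimple and $V$ is projective, Lemma \ref{L:1} applies to $\psi$ and gives $\Tr(\psi)=0$, where $\psi$ is regarded as a linear operator on the underlying vector space $V$ (recall $\!_{S^2}V=V$ as vector spaces).

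Next I would pin down the eigenvalues of $\psi$. Writing $\rho$ for the representation of $H$ on $V$, the $H$-linearity of $\psi\colon V\to\!_{S^2}V$ reads $\psi\,\rho(h)=\rho(S^2(h))\,\psi$ for all $h\in H$. Squaring this relation and invoking $S^4=\id$ yields $\psi^2\,\rho(h)=\rho(S^4(h))\,\psi^2=\rho(h)\,\psi^2$, so $\psi^2\in\End_H(V)$. Because $V$ is indecomposable, $\End_H(V)$ is local, and since $\kk$ is algebraically closed its residue field is $\kk$; hence $\psi^2=\lambda\,\id+n$ with $\lambda\in\kk$ and $n$ in the nilpotent radical. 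As $\psi$ is invertible, so is $\psi^2$, which forces $\lambda\neq0$; and $n$ being a nilpotent operator, every eigenvalue of $\psi^2$ equals $\lambda$. Fixing a square root $\mu$ of $\lambda$, every eigenvalue of $\psi$ is therefore $\pm\mu$, and $\mu\neq-\mu$ because $q>2$. I expect this reduction, namely recognizing that $\psi^2$ lies in the local ring $\End_H(V)$ and so constrains $\psi$ to just two eigenvalues, to be the linchpin of the argument.

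The final step is a counting argument modulo $q$. Let $a$ and $b$ denote the algebraic multiplicities of the eigenvalues $\mu$ and $-\mu$ of $\psi$, so that $a+b=\dim V$. Since $\dim V$ is odd, the integers $a$ and $b$ have opposite parity, and in particular $a\neq b$. On the other hand $\Tr(\psi)=(a-b)\mu=0$ in $\kk$ with $\mu\neq0$, which forces $q\mid(a-b)$. A nonzero integer multiple of $q$ has absolute value at least $q$, whence $\dim V=a+b\ge|a-b|\ge q$, as claimed. The only delicate point to record carefully is the interplay between the two worlds here: the relation $a\equiv b$ holds modulo $q$ (coming from the characteristic-$q$ trace), while $a\neq b$ is an honest inequality of integers (coming from the oddness of $\dim V$), and it is exactly the tension between these that produces the bound $\dim V\ge q$.
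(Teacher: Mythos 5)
Your proposal is correct and follows essentially the same route as the paper: use locality of $\End_H(V)$ to show the self-duality isomorphism has only two possible eigenvalues, apply Lemma \ref{L:1} to get vanishing trace, and then play the resulting congruence $a\equiv b \pmod{q}$ against the parity constraint from $\dim V$ being odd. The only cosmetic difference is that the paper rescales $\phi$ so that $\phi^2$ is unipotent (making the eigenvalues $\pm 1$), whereas you carry the square root $\mu$ through the argument; the substance is identical.
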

\begin{proof}
Let $\phi: V \xrightarrow{\cong} V^{**}\xrightarrow{\cong} {\!_{S^2}} V$ be such an isomorphism of $H$-modules. Then $\phi^{n}\in \End_H(V)$ where $n=\ord\,(S^2)$. Since $V$ is an indecomposable projective $H$-module, $\End_H(V)$ is local (cf. \cite{P}). Hence, by rescaling $\phi$, we can assume $\phi^n$ to be unipotent. Moreover, since $n\le 2$, the eigenvalues of $\phi$ can only be $\pm 1$. Let $d_{\pm}$ be the corresponding dimensions of the $\pm 1$ generalized eigenspaces of $\phi$. Then $d_++ d_-=\dim V$ and $d_+-d_-\equiv 0 \pmod{q}$ since $\text{tr}(\phi)=0$ by Lemma \ref{L:1}. So $\dim V = 2a + mq$ where $a=\min(d_-, d_+)$ and $m$ is a positive integer. So $\dim V\ge q$ for $\dim V$ is odd. This concludes the proof.
\end{proof}

\section{Main results}\label{S:2}
Now we are able to provide an affirmative answer to address Question \ref{Q} in the case when $q=2$ or $p<4q$.
\begin{thm}\label{T:Main}
Let $H$ be a Hopf algebra of prime dimension $p$ over an algebraically closed field $\kk$ with characteristic $p$. Then $H$ is isomorphic to one of the following:
\begin{enumerate}
\item $\kk[C_p]$;
\item $\kk[x]/(x^p)$ with $\Delta(x)=x\otimes 1+1\otimes x$;
\item $\kk[x]/(x^p -x)$ with $\Delta(x)=x\otimes 1+1\otimes x$.
\end{enumerate}
\end{thm}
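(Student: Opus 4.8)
The plan is to split according to the group $G(H)$ of grouplike elements and its dual analogue. By the Nichols--Zoeller theorem $\kk[G(H)]$ is a Hopf subalgebra of $H$, so $|G(H)|$ divides $\dim H=p$; the same applies to $G(H^*)$, the group of algebra homomorphisms $H\to\kk$. Hence each of $|G(H)|,|G(H^*)|$ is $1$ or $p$. If $|G(H)|=p$, then $\kk[G(H)]=\kk[C_p]$ already exhausts the dimension, so $H\cong\kk[C_p]$, which is case (1). Dually, if $|G(H^*)|=p$, then $H^*\cong\kk[C_p]$ and $H\cong\kk[C_p]^*\cong\kk^{C_p}$; a direct computation (the function $k\mapsto k$ on $C_p$ is primitive, generates, and satisfies $x^p=x$) identifies $\kk^{C_p}$ with case (3). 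Since $\kk[C_p]\not\cong\kk^{C_p}$, the combination $|G(H)|=|G(H^*)|=p$ cannot occur.

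It remains to treat $|G(H)|=|G(H^*)|=1$ and show $H$ is case (2); assume $p>2$ (the case $p=2$ is a direct dimension-two classification, as Lemma \ref{L:3} requires $q>2$). Because both distinguished grouplikes of $H$ are trivial, Radford's formula for $S^4$ gives $S^4=\id$, so $\ord(S^2)\le 2$. The key reduction is: \emph{for nonsemisimple $H$ in this case, $H$ is local.} Indeed, in $\dim H=\sum_{V\in\Irr(H)}\dim V\cdot\dim P(V)$ the autoequivalence $V\mapsto V^{**}\cong{}_{S^2}V$ permutes $\Irr(H)$ and preserves both $\dim V$ and $\dim P(V)$; the non-self-dual simples pair off into equal terms, so since $p$ is odd some self-dual $V\cong V^{**}$ has $\dim V\cdot\dim P(V)$ odd, whence $\dim P(V)$ is odd. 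As $P(V)$ is indecomposable projective with $P(V)\cong P(V)^{**}$, Lemma \ref{L:3} forces $\dim P(V)\ge p$; since $P(V)$ is a summand of $H$ we get $\dim P(V)=p$, the sum has a single term, and $\dim V=1$. Thus $V=\kk$, $H=P(\kk)$ is indecomposable, and $H$ is local.

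Next I exclude a semisimple $H$. By the Larson--Radford/Etingof--Gelaki criterion, $H$ cannot be both semisimple and cosemisimple when $\Chara\,\kk=p$ divides $\dim H=p$; equivalently $H$ and $H^*$ are not both semisimple. If $H$ were semisimple, then $H^*$ is nonsemisimple and also has trivial grouplikes with $S^4=\id$, so the previous paragraph makes $H^*$ local and hence $H\cong(H^*)^*$ connected. A connected Hopf algebra of dimension $p$ is generated by its primitives: there is $0\ne x\in P(H)$, necessarily $\dim P(H)=1$ since $u(P(H))\hookrightarrow H$ and $p^{\dim P(H)}\le p$, and $x^{[p]}=\lambda x$, so $H\cong\kk[x]/(x^p-\lambda x)$ with $x$ primitive; this is case (2) when $\lambda=0$ and case (3) (rescaling to $x^{[p]}=x$) when $\lambda\ne 0$. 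A semisimple connected $H$ must be case (3), i.e. $\kk^{C_p}$, but then $|G(H^*)|=|G(\kk[C_p])|=p$, contradicting $|G(H^*)|=1$. Hence $H$ is nonsemisimple.

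Therefore $H$ is local, so $H^*$ is connected, and by the connected classification just established $H^*$ is case (2) or case (3); case (3) for $H^*$ would give $H\cong\kk[C_p]$ with $|G(H)|=p$, which is excluded, so $H^*$ is case (2), and since case (2) is self-dual (Cartier self-duality of $\alpha_p$) we conclude $H$ is case (2). The main obstacle is ruling out an exotic noncommutative semisimple $H$: this is exactly where the trace vanishing of Lemma \ref{L:1}, funneled through Lemma \ref{L:3}, together with the parity of $p$, is essential, as it upgrades $S^4=\id$ into indecomposability of the regular representation. The remaining ingredients---the local/connected duality, the restricted-enveloping description of the primitives, and the self-duality of $\alpha_p$---are routine once locality is in hand.
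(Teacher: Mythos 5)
Your proposal is correct, and its engine is the same as the paper's: the Nichols--Zoeller dichotomy on $G(H)$ and $G(H^*)$, Radford's formula giving $S^4=\id$ when both are trivial, and then the parity argument (your inline re-derivation of Lemma \ref{L:2} with $q=2$) combined with Lemma \ref{L:3} to force $\dim P(V)\ge p$, hence $H=P(\kk)$ local and $H^*$ connected. Where you genuinely diverge is everything after locality. The paper cites the classification of connected $p$-dimensional Hopf algebras \cite[Theorem 7.1]{WangP2} twice: once to conclude $H^*$ is commutative (hence $H$ cocommutative, hence pointed over the algebraically closed field, splitting into the group-algebra and connected cases), and once more to identify the connected case with a restricted enveloping algebra. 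You instead prove the dimension-$p$ connected classification from scratch: a connected Hopf algebra of dimension $p>1$ in characteristic $p$ has a nonzero primitive, the embedding $u(P(H))\hookrightarrow H$ together with Nichols--Zoeller forces $\dim P(H)=1$, and restricted PBW plus rescaling gives $\kk[x]/(x^p-\lambda x)$, i.e.\ cases (2) and (3). Your casework is also organized dually to the paper's: cases (1) and (3) are dispatched at the outset via grouplikes, semisimplicity of $H$ is excluded by running the locality argument on $H^*$, and case (2) is pinned down by self-duality of $\alpha_p$ (which, incidentally, you can get without invoking Cartier duality: the dual of case (2) is both connected and local, and case (3) is not local). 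This buys self-containedness --- no dependence on \cite{WangP2} --- at the modest cost of the standard fact that $u(P(H))$ embeds in $H$ in characteristic $p$ (characteristic-$p$ Milnor--Moore applied to the primitively generated Hopf subalgebra); that embedding deserves a citation or a proof, and you should also make the deferred $p=2$ case explicit, since Lemma \ref{L:3} requires $q>2$ and your entire locality mechanism shuts off there.
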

\begin{proof}
By \cite[Corollary 3.2(i)]{EGelaki}, $H$ cannot be both semisimple and cosemisimple since $\dim H=0$ in $\kk$.  We can simply assume $H$ to be nonsemisimple by duality. We first show that $H$ is  cocommutative or equivalently $H^*$ is commutative. If $p=2$, it is clear that the 2-dimensional Hopf algebra $H^*$, which is spanned by $1$ and another element $x\in H^*$, is commutative. Now let $p>2$.

We observe that the proof of $S^4=\id$ given in \cite[Theorem 2]{Zhu} remains true in positive characteristic. If $H$ has a nontrivial group-like element $g$, then $g$ must generate $H$ by the Nichols-Zoeller Theorem \cite{NZ}, and thus $H\cong  \kk[C_p]$. Therefore, $S^2=\id$. By the same argument, if $H^*$ has a nontrivial group-like element,  $S^2=\id$. If neither $H$ nor $H^*$ has nontrivial group-like elements, we obtain $S^4=\id$ from the celebrated Radford's formula \cite{Rad} of $S^4$.

Now we have $\ord(S^2)\le 2$ and  $\dim H=p>2$. Applying Lemma \ref{L:2}, there exists an indecomposable projective $H$-module $M$ of odd dimension such that $M \cong M^{**}$. Hence $M=H$ since $\dim M\ge p=\dim H$ by Lemma \ref{L:3}. So $H \cong \End_H(H)=\End_H(M)$ is local and $H^*$ is connected. It follows from the classification of all connected $p$-dimensional Hopf algebras \cite[Theorem 7.1]{WangP2} that $H^*$ is commutative.

Finally, since $H$ is always cocommutative and $\kk$ is algebraically closed, we know $H$ is pointed (cf. \cite[\S 5.6]{Mo}). If the coradical of $H$ contains a nontrivial group-like element, as discussed above, $H$ is a group algebra as in case (1). Otherwise, $H$ has trivial coradical, namely, $H$ is connected. Then $H$ is a restricted universal enveloping algebra as in cases (2) \& (3) (cf. \cite[Theorem 7.1]{WangP2}).
\end{proof}

We want to make a final remark regarding Question \ref{Q}(2) that our technique presented in this paper will generally fail if $q<p$. But when the ratio $q/p$ between the two distinct primes is greater than $1/4$, we can still provide a positive answer by pushing our method a little further. We need a few lemmas before establishing our second main result.

\begin{lem} \label{l:crucial}
Let $H$ be a nonsemisimple Hopf algebra of prime dimension $p$ over an algebraically closed field $\kk$ of characteristic $q \ne p$. Then $G(H)$ and $G(H^*)$ are trivial and there exists a simple $H$-module $M$ satisfying $\dim P(M) > \dim M>1$.
\end{lem}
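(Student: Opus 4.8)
The plan is to establish the three assertions in two stages: the triviality of $G(H)$ and $G(H^{*})$ first, and then the existence of $M$, which I would extract from the projective cover $P(\kk)$ of the trivial module.

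For the group-like elements, I would invoke the Nichols--Zoeller theorem \cite{NZ}: both $|G(H)|$ and $|G(H^{*})|$ divide $\dim H=\dim H^{*}=p$, so each is $1$ or $p$. If $|G(H)|=p$, then $\kk[G(H)]$ is a Hopf subalgebra of full dimension, whence $H\cong\kk[C_{p}]$; as $q\neq p$, the integer $p$ is invertible in $\kk$ and Maschke's theorem \cite{Mo} forces $H$ to be semisimple, contrary to hypothesis. Symmetrically, if $|G(H^{*})|=p$ then $H^{*}\cong\kk[C_{p}]$, so $H\cong(\kk[C_{p}])^{*}\cong\kk^{C_{p}}$ is a finite product of copies of $\kk$ and thus semisimple --- again a contradiction. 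Hence both groups are trivial. I would record the byproduct that, since one-dimensional $H$-modules correspond to group-like elements of $H^{*}$, the trivial module $\kk$ is the only simple $H$-module of dimension $1$.

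To produce $M$, I would analyse $P(\kk)$. Because $H$ is nonsemisimple, $\kk$ is not projective (by the integral form of Maschke's theorem, cf.\ \cite{Mo}), so $P(\kk)\neq\kk$ and $\operatorname{rad}P(\kk)\neq 0$; therefore the second radical layer $\operatorname{rad}P(\kk)/\operatorname{rad}^{2}P(\kk)$ is a nonzero semisimple module, and some simple $V$ satisfies $\Ext^{1}_{H}(\kk,V)\neq 0$. If such a $V$ can be chosen with $\dim V>1$, it is the module I want: a nonsplit extension $0\to V\to E\to\kk\to 0$ witnesses that $V$ is not injective, and since $H$ is Frobenius its injective and projective modules coincide, so $V$ is not projective; consequently $\dim P(V)>\dim V>1$ and I may take $M=V$.

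The remaining possibility --- and what I expect to be the crux --- is that every simple $V$ with $\Ext^{1}_{H}(\kk,V)\neq 0$ is one-dimensional, hence equal to $\kk$, so that $\Ext^{1}_{H}(\kk,\kk)\neq 0$. Here I would translate a nonsplit self-extension of $\kk$ into a nonzero $(\varepsilon,\varepsilon)$-derivation $H\to\kk$, that is, a nonzero primitive element $f\in H^{*}$. Then $\kk[f]$ is a Hopf subalgebra of $H^{*}$ of dimension at least $2$, so by Nichols--Zoeller it divides $p$ and therefore equals $p$; thus $H^{*}=\kk[f]$ is commutative and $H$ is cocommutative. Over the algebraically closed field $\kk$ a cocommutative Hopf algebra is pointed, with coradical $\kk[G(H)]=\kk$, so $H$ is connected and $H^{*}$ is local. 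But a finite-dimensional commutative local (hence infinitesimal) Hopf algebra over $\kk$ has dimension a power of $q$, which would force $p=q^{n}$ and contradict $p\neq q$. This rules out the exceptional case, and the only place the coprimality $q\neq p$ enters is precisely in turning a self-extension of the trivial module into a primitive element; that is the step I expect to require the most care.
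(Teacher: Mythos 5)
Your proposal is correct in positive characteristic and follows the paper's overall strategy (Nichols--Zoeller plus Maschke for the triviality of $G(H)$ and $G(H^*)$, then extracting a simple $V$ from the layer $JP(\kk)/J^2P(\kk)$ with $\Ext(\kk,V)\neq 0$), but it diverges at two points, both legitimately. First, once you have $\Ext(\kk,V)\neq 0$ with $\dim V>1$, you conclude directly that $V$ is not injective, hence not projective because $H$ is Frobenius, and take $M=V$; the paper instead dualizes to get $\Ext(V^*,\kk)\neq 0$, places $\kk$ inside $JP(V^*)/J^2P(V^*)$, and takes $M=V^*$. Your version is slightly cleaner and avoids the dualization. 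Second, to rule out $\Ext(\kk,\kk)\neq 0$, the paper uses that the Hopf subalgebra of $H^*$ generated by a nonzero primitive element $\chi$ has dimension $q^l$ for some $l\geq 1$ (it is a restricted enveloping algebra), so Nichols--Zoeller gives $q^l\mid p$, an immediate contradiction. You instead apply Nichols--Zoeller first to force $H^*=\kk[f]$, deduce that $H$ is cocommutative, pointed with trivial group-likes, hence connected, so that $H^*$ is local, and then invoke the theorem that a finite connected (infinitesimal) group scheme in characteristic $q>0$ has order a power of $q$. This is valid but a heavier citation and somewhat roundabout: the ``power of $q$'' fact applied to the subalgebra generated by $f$ (the paper's observation) gives the contradiction without first identifying $H^*$ with $\kk[f]$.

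One caveat: the lemma as stated allows $q=0$, and your final step (``dimension a power of $q$,'' forcing $p=q^n$) only makes literal sense for $q>0$. The paper disposes of characteristic zero separately by citing \cite[Lemma 2.1(vi)]{Ng}. Your argument patches easily--in characteristic zero a finite-dimensional Hopf algebra has no nonzero primitive elements (equivalently, by Cartier's theorem finite group schemes are \'etale, so a commutative local one is trivial), so the existence of $f$ is already absurd--but as written your proof does not cover that case, so you should either add this remark or restrict the argument to $q>0$ and handle $q=0$ by reference.
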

\begin{proof}
  The case when $q=0$ follows from the same proof of \cite[Lemma 2.1(vi)]{Ng}. We now assume $q \ne 0$. If $H^*$ or $H$ has nontrivial group-like element, then, by the Nichols-Zoeller  Theorem,  $H^*$ or $H$ is isomorphic to a group algebra over $\kk$. In particular, $H$ is semisimple and cosemisimple for $q\neq p$ which contradicts the assumption of $H$.

  Since $H$ is nonsemisimple, $P(\kk) \ne \kk$ and hence $JP(\kk)/J^2P(\kk) \ne 0$, where $J$ is the Jacobson radical of $H$. Let $V$ be a simple $H$-submodule of $JP(\kk)/J^2P(\kk)$. Suppose $\dim V =1$. Then $V \cong \kk$ and so $\Ext(\kk,\kk) \ne 0$ (cf. \cite[Lemma 1.3]{CN}). This implies the existence of a nontrivial primitive element $\chi$ in $H^*$. The Hopf subalgebra of $H^*$ generated by $\chi$ can only be of dimension $q^l$ for some positive integer $l$, and $q^l \mid p$, a contradiction. This forces $\dim V > 1$ and $\Ext(\kk, V)\ne 0$.

  By taking dual, we find $\dim V^*>1$ and $\Ext(V^*, \kk)\ne 0$. Therefore, $\kk$ is submodule of $JP(V^*)/J^2P(V^*)$. Thus, $\dim P(V^*) \ge \dim V^*+1$.
\end{proof}

Next we consider the set
\begin{align}\label{Gamma}
\Gamma(H):=\left\{\left(\dim V,\dim P(V)\right)\in \mathbb N^2\,|\,V\in \mathrm{Irr(H)}\right\}.
\end{align}
of dimension pairs of $H$. We show in the following lemma that $|\Gamma(H)|>2$ and $|\Irr(H)| > 3$  by adapting the proof of  \cite[Proposition 3.6]{Ng}.
\begin{lem}\label{IrrNumber}
Let $H$ be a nonsemisimple Hopf algebra of prime dimension $p$ over an algebraically closed field $\kk$ of positive characteristic $q \ne p$.  Then $|\Gamma(H)|\ge 3$ and $|\mathrm{Irr}(H)|\ge 4$.
\end{lem}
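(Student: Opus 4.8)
The plan is to play the two dimension identities available for $H$ against each other: the Frobenius decomposition \eqref{Dec}, which reads $p=\sum_{V\in\mathrm{Irr}(H)}\dim V\cdot\dim P(V)$, and the projective‑module identity \eqref{PM}. By Lemma \ref{l:crucial}, $G(H^*)$ is trivial, so $\kk$ is the \emph{only} one‑dimensional simple; write $a:=\dim P(\kk)$, and note $2\le a<p$ (nonsemisimplicity gives $a\ge 2$, while $a=p$ would make $H\cong P(\kk)$ local with unique simple $\kk$, contradicting the module $M$ of Lemma \ref{l:crucial} with $\dim M>1$). I also record that $S^2\ne\mathrm{id}$: since $G(H)=G(H^*)=1$, Radford's formula \cite{Rad} gives $S^4=\mathrm{id}$, and if $S^2=\mathrm{id}$ then ${}_{S^2}P\cong P$ for every indecomposable projective, so Lemma \ref{L:1} applied to $\mathrm{id}_P$ yields $\dim P=\mathrm{tr}(\mathrm{id}_P)=0$ in $\kk$, i.e.\ $q\mid\dim P$ for all $P$ and hence $q\mid\dim H=p$, which is absurd. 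Thus $\mathrm{ord}(S^2)=2$.

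For $|\Gamma(H)|\ge 3$ I argue by contradiction. The pairs $(1,a)$ and $(\dim M,\dim P(M))$ are already distinct, so suppose these are the only two. Then every nontrivial simple has pair $(m,b):=(\dim M,\dim P(M))$ with $b>m>1$, and \eqref{Dec} becomes $p=a+t\,mb$, where $t\ge 1$ is the number of simples of dimension $m$. Feeding $V=M$, $W=\kk$ into \eqref{PM} and taking dimensions, the $U=\kk$ term drops out because $[M:\kk]=0$, leaving $ma=b\sum_{\dim U=m}[M\otimes U:\kk]=:b\sigma$ with $\sigma\ge 1$; hence $b\mid ma$. On the other hand $p=a+tmb$ prime with $1\le a<p$ forces $\gcd(a,mb)=1$, in particular $\gcd(a,b)=1$, so $b\mid m$, contradicting $b>m$. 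Therefore $|\Gamma(H)|\ge 3$.

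For $|\mathrm{Irr}(H)|\ge 4$, distinct pairs give distinct simples, so there are already at least three; it remains to exclude exactly three, say $\kk,V_1,V_2$ with pairwise distinct pairs. The duality $V\mapsto V^*$ permutes $\mathrm{Irr}(H)$, preserves both $\dim V$ and $\dim P(V)$, and fixes $\kk$; as the three pairs are distinct it must fix $V_1,V_2$, so both are self‑dual. Writing $d_i=\dim V_i$, $P_i=\dim P(V_i)$ and $c_{ij}=[V_i\otimes V_j:\kk]$ (symmetric by self‑duality, with $c_{ii}\ge 1$), \eqref{Dec} and \eqref{PM} (with $W=\kk$, exactly as above) give $p=a+d_1P_1+d_2P_2$, $\,ad_1=c_{11}P_1+c_{12}P_2$, and $ad_2=c_{12}P_1+c_{22}P_2$. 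Reading the last two as a linear system in $(P_1,P_2)$ and imposing consistency with the first yields the single identity $(p-a)\det c=a\,Q$, where $Q=c_{22}d_1^2-2c_{12}d_1d_2+c_{11}d_2^2$. Since $1\le a<p$ gives $\gcd(a,p)=1$, I extract $a\mid\det c$ and $p\mid(\det c+Q)$, and I would combine these with the positivity of $P_1,P_2$ and the strict inequality $P_i>d_i$ (which holds for $M\in\{V_1,V_2\}$) to force a contradiction.

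The main obstacle is precisely this last elimination of the three‑simple case: the three displayed relations alone admit spurious positive‑integer solutions, so closing the argument requires additional structure. The input I would bring in is the family of composition‑factor identities $d_id_j=\dim(V_i\otimes V_j)=\sum_k[V_i\otimes V_j:V_k]\dim V_k$, which bound the $c_{ij}$ (e.g.\ $c_{ii}\le d_i^2$ and $c_{12}\le d_1d_2$) and, reduced modulo $d_1$ and $d_2$, constrain $\det c$; inserting these into $(p-a)\det c=a\,Q$ alongside the primality of $p$ should pin down $\det c$ and clash with $a\mid\det c$, $a\ge 2$. If this arithmetic turns out too coarse, my fallback is the trace data coming from $\mathrm{ord}(S^2)=2$: applying Lemma \ref{L:1} to an isomorphism $P(V_i)\to{}_{S^2}P(V_i)$ as in the proof of Lemma \ref{L:3} makes the $\pm 1$‑eigenspace dimensions of each self‑dual projective congruent modulo $q$, yielding divisibility constraints on $a,P_1,P_2$ that sharpen the count \eqref{Dec} and should rule out three simples outright.
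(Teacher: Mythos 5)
Your first half is correct and complete. Assuming $|\Gamma(H)|=2$ with pairs $(1,a)$ and $(m,b)$, $b>m>1$, you decompose $M^*\otimes P(\kk)$ via \eqref{PM}, use $[M:\kk]=0$ to drop the $P(\kk)$-summand and get $b\mid ma$, then play this against primality of $p=a+tmb$ to get $\gcd(a,b)=1$ and the contradiction $b\mid m$. This is a legitimate variant of the paper's argument, which instead decomposes $V^*\otimes P(V)$ (whose $P(\kk)$-multiplicity is exactly $1$) to get $d_1D_1=D_0+mD_1$, hence $p=((n+1)d_1-m)D_1$ with $1<D_1<p$; same ingredients, either route works. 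Your preliminary observations ($a\ge 2$, $\ord(S^2)=2$) are also correct, though the latter is never needed.

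The second half is where there is a genuine gap, which you flag yourself: the relations you set up do not suffice, and neither of your proposed repairs closes the case. This is not a matter of missing arithmetic cleverness --- your system really does have spurious solutions, even after adding everything you propose. Concretely, take $a=2$, $(d_1,P_1)=(2,4)$, $(d_2,P_2)=(3,3)$, $c_{11}=1$, $c_{12}=0$, $c_{22}=2$. Then $p=a+d_1P_1+d_2P_2=19$ is prime; $ad_1=c_{11}P_1+c_{12}P_2$ and $ad_2=c_{12}P_1+c_{22}P_2$ hold; the three pairs are distinct; $P_1>d_1>1$; $a\mid\det c=2$ and $p\mid \det c+Q=19$; your composition-factor identities are satisfiable, e.g. $(N_{11}^1,N_{11}^2)=(0,1)$, $(N_{12}^1,N_{12}^2)=(3,0)$, $(N_{22}^1,N_{22}^2)=(2,1)$; and the mod-$q$ trace constraints of your fallback are consistent as well (take $q=3$: Lemma \ref{L:3} only demands that odd-dimensional self-dual projectives have dimension $\ge 3$). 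So no argument built solely on the data you retain can finish the proof. Moreover, the paper's actual proof is characteristic-free arithmetic, which is a strong hint that trace-mod-$q$ input (which only ever produces congruences and bounds of size $q$, exactly why Theorem \ref{P:M} stops at $p>4q$) is the wrong tool here.

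The missing idea is structural: you only record $c_{ij}=[V_i\otimes V_j:\kk]$, i.e. the $W=\kk$ slice of \eqref{PM}, but the contradiction lives in the other slices. The paper keeps all $N_{ij}^k=[V_i\otimes V_j:V_k]$ and applies \eqref{PM} with the nontrivial target $W=V_2$. Decomposing $V_1^*\otimes P(V_2)$, the $P(\kk)$-term vanishes because $N_{10}^2=[V_1:V_2]=0$ --- the exact analogue of your $[M:\kk]=0$ trick, one slice up --- giving $N_{11}^2D_1=(d_1-N_{21}^2)D_2$, which is \eqref{E3}; decomposing $V_2^*\otimes P(V_2)$ and substituting into \eqref{Dec} gives $p=(d_1-N_{21}^2)D_1+(2d_2-N_{22}^2)D_2$, which is \eqref{E4}, where $2d_2-N_{22}^2\ge d_2>1$. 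Primality then forces $d_1-N_{21}^2>0$ (else $D_2\mid p$ with $1<D_2<p$) and $\gcd(D_1,D_2)=1$, whence $D_1\mid d_1-N_{21}^2$ by \eqref{E3}; this is impossible because $0<d_1-N_{21}^2\le d_1<D_1$ (here $d_1<D_1$ is arranged by taking $V_1$ to be the module $M$ of Lemma \ref{l:crucial}). In your spurious configuration this extra equation kills things instantly: $d_1D_2=6=N_{11}^2D_1+N_{12}^2D_2=4\cdot 1+3N_{12}^2$ has no nonnegative integer solution, since $N_{11}^2=1$ is forced by your own identity $d_1^2=c_{11}+N_{11}^1d_1+N_{11}^2d_2$. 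That equation, not more constraints on the $c_{ij}$, is what your proof needs.
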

\begin{proof}
It follows from Lemma \ref{l:crucial} that $|\Gamma(H)| \ge 2$. Suppose $|\Gamma(H)|=2$. Then $\Gamma(H)=\{(1,D_0),(d_1,D_1)\}$ where $D_1 > d_1 > 1$ (cf. Lemma \ref{l:crucial}). Let $V \in \Irr(H)$ such that $\dim V =d_1$ and $n$ the number of nonisomorphic simple $H$-modules of dimension $d_1$. Then, by \eqref{Dec}, we have
\begin{equation}\label{eq:1}
 p = D_0 + n d_1 D_1\,.
\end{equation}
Since the multiplicity of $P(\kk)$ in $V^* \o P(V)$ is 1, all other  indecomposable projective summands of $V^* \o P(V)$ are of dimension $D_1$ and so
\begin{equation}\label{eq:2}
d_1 D_1 = \dim(V^* \o P(V)) = D_0 + m D_1
\end{equation}
for some nonnegative integer $m$. These two equations \eqref{eq:1} and \eqref{eq:2} imply
$$
p = ((n+1)d_1 -m)D_1
$$
but this is impossible as $1< D_1 < p$ by \eqref{eq:1}. Therefore, $|\Gamma(H)| > 2$ and hence $|\Irr(H)|>2$.

Suppose $|\Irr(H)|=3$. Let $\Irr(H)=\{V_0,V_1,V_2\}$, where $V_0=\kk$, and $d_i=\dim V_i$ and $D_i = \dim P(V_i)$ for $i=0,1,2$. Since $G(H^*)$ is trivial by Lemma \ref{l:crucial}, it follows that $d_1, d_2 > 1$, and we may assume $d_1 < D_1$.  Also, note that $V_i^* \cong V_i$ for $i=1, 2$ for otherwise $V_1^* \cong V_2$ and hence $|\Gamma(H)|=2$. For $i,j,k =0,1,2$, write $N_{ij}^k=[V_i\otimes V_j:V_k]$ and we have
\begin{align*}
N_{ij}^k=[V_i\otimes V_j:V_k]=[(V_i\otimes V_j)^*:V_k^*]=[V_j^*\otimes V_i^*:V_k^*]=[V_j\otimes V_i:V_k]=N_{ji}^k.
\end{align*}
From \eqref{PM}, we get
\begin{align}\label{E1}
d_iD_k\,=\,\sum_{j=0}^2\, N_{ij}^k\, D_j\quad  \text{for}\quad 0\le i,k\le 2.
\end{align}
By \eqref{Dec},
\begin{align}\label{E1.5}
p=\sum_{j=0}^2 d_jD_j
\end{align}
which implies $1< D_1, D_2 < p$. Together with \eqref{E1}, we have
\begin{align}\label{E2}
p\,=\,\sum_{j=0}^2\, \left(d_j+\delta_{jk}d_i-N_{ij}^k\right)\, D_j\quad  \text{for all}\quad 0\le i,k\le 2.
\end{align}
By taking  $i=1,k=2$ in \eqref{E1} and $i=k=2$ in \eqref{E2}, we obtain
\begin{align}
N_{11}^2D_1&\,=(d_1-N_{12}^2)\,D_2=(d_1-N_{21}^2)\, D_2\,,\label{E3}\\
p&\,=(d_1-N_{21}^2)\,D_1+(2d_2-N_{22}^2)\,D_2.\label{E4}
\end{align}
By \eqref{E1}, $2d_2-N_{22}^2 \ge d_2 >1$. The equations \eqref{E3} and \eqref{E4} force $d_1-N_{21}^2 > 0$, otherwise $D_2$ is a nontrivial factor of $p$. Then, the equation \eqref{E4} further implies
$\gcd(D_1,D_2)=1$. So $D_1\,|\, d_1-N_{21}^2$ by \eqref{E3}. Since $D_1>d_1$, $d_1-N_{21}^2 = 0$, a contradiction! This completes our proof.
\end{proof}

\begin{thm}\label{P:M}
Let $H$ be a Hopf algebra of prime dimension $p$ over an algebraically closed field $\kk$ with positive characteristic $q < p$. If  $H$ is nonsemisimple, then  $q\ne 2$ and $p> 4q$. As a consequence, $H$ is isomorphic to a group algebra when $q =2$ or $ p< 4q$.
\end{thm}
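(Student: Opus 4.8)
The plan is to establish the two assertions \textbf{$q\neq 2$} and \textbf{$p>4q$} for nonsemisimple $H$, and then read off the stated consequence. Throughout I would use the standing reductions: since $q<p$ and $p$ is prime, $q\nmid\dim H=p$ and $p$ is odd; by Lemma \ref{l:crucial} both $G(H)$ and $G(H^*)$ are trivial, so Radford's formula (exactly as in the proof of Theorem \ref{T:Main}) gives $S^4=\id$, i.e.\ $\ord(S^2)\in\{1,2\}$; and by Lemma \ref{IrrNumber} we have $|\Irr(H)|\ge 4$. I would also use repeatedly that $\kk$ is the only one-dimensional simple (as $G(H^*)$ is trivial), so every simple $V\neq\kk$ has $\dim V\ge 2$ and hence $\dim P(V)\ge\dim V\ge 2$.

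First I would dispose of $q=2$. Since $2\nmid p$ and $\ord(S^2)$ is a power of $2$, Lemma \ref{L:2} applied with the prime $2$ produces a simple module $V$ with $\dim P(V)$ odd and $V\cong V^{**}$. Then $P(V)$ is indecomposable projective with $P(V)^{**}\cong P(V^{**})\cong P(V)$, so there is an isomorphism $\phi\colon P(V)\xrightarrow{\ \cong\ }{}_{S^2}P(V)$. As $P(V)$ is indecomposable, $\End_H(P(V))$ is local, so after rescaling $\phi$ we may assume $\phi^{\ord(S^2)}$ is unipotent; then every eigenvalue $\lambda$ of $\phi$ satisfies $\lambda^{\ord(S^2)}=1$, hence $\lambda=1$ since $\ord(S^2)\le 2$ and $\mathrm{char}\,\kk=2$. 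Thus $\mathrm{tr}(\phi)=\dim P(V)\equiv 1\pmod 2\neq 0$, contradicting Lemma \ref{L:1}. This is precisely the computation in the proof of Lemma \ref{L:3}, which in characteristic $2$ yields a contradiction rather than a dimension bound, so $q\ge 3$.

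For the bound $p>4q$ (now $q\ge 3$) I would combine three ingredients. \emph{(i)} Taking $W=V$ in \eqref{PM}, the coefficient of $P(\kk)$ in $V^*\otimes P(V)$ is $[V\otimes\kk:V]=[V:V]=1$, so $P(\kk)$ is a direct summand of $V^*\otimes P(V)$; comparing dimensions gives $\dim V\cdot\dim P(V)\ge D_0$ for \emph{every} simple $V$, where $D_0:=\dim P(\kk)$. \emph{(ii)} By Lemma \ref{L:2} with the prime $2$ there is a $**$-self-dual simple $V_0$ with $\dim P(V_0)$ odd, and applying Lemma \ref{L:3} to the indecomposable projective $P(V_0)$ gives $\dim P(V_0)\ge q$. \emph{(iii)} $|\Irr(H)|\ge 4$. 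If $D_0$ is odd, then $\kk\cong\kk^{**}$ together with Lemma \ref{L:3} applied to $P(\kk)$ forces $D_0\ge q$, so by \emph{(i)} every simple contributes $\dim V\cdot\dim P(V)\ge D_0\ge q$ to \eqref{Dec}; for $V\neq\kk$ this contribution is a product of two integers $\ge 2$, hence composite, so it cannot equal the prime $q$ and is therefore $\ge q+1$. With four or more simples this gives $p\ge D_0+3(q+1)=4q+3>4q$. The \textbf{main obstacle} is the complementary case, where $D_0$ is even (so the self-dual $V_0$ of \emph{(ii)} is nontrivial, contributing $\dim V_0\cdot\dim P(V_0)\ge 3q$): here Lemma \ref{L:3} gives no bound on the projective covers of non-self-dual simples or of those with even $\dim P$, and a small $D_0$ is not immediately excluded. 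To finish I would assume $p\le 4q$ for contradiction and run the fusion-rule analysis of Lemma \ref{IrrNumber}, using \eqref{Dec}, \eqref{PM}, the symmetry $N_{ij}^k=N_{ji}^k$ and identities in the spirit of \eqref{E1}--\eqref{E4}, organised by the $**$-orbits and by which simples are projective; the inequalities $\dim V\cdot\dim P(V)\ge D_0$ and $\dim P(V_0)\ge q$ should pin the dimension set $\Gamma(H)$ of \eqref{Gamma} down to finitely many possibilities, each violating $p\le 4q$. This bookkeeping, not any single estimate, is where the real work lies.

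Finally, for the consequence: if $q=2$ or $p<4q$ (note $p\neq 4q$ since $4q$ is composite), then by the above a nonsemisimple $H$ is impossible, so $H$ is semisimple; applying the same argument to $H^*$, which also has dimension $p$ over a field of characteristic $q$, shows that $H$ is cosemisimple as well. A semisimple and cosemisimple Hopf algebra lifts, by the Etingof--Gelaki lifting theorem \cite{EGelaki}, to a semisimple cosemisimple Hopf algebra of dimension $p$ in characteristic $0$, which is a group algebra by Theorem \ref{T:Z}; hence $H\cong\kk[C_p]$.
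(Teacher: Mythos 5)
Your treatment of the case $q=2$ is correct (it is essentially the paper's argument, run on $P(V)$ instead of on $H$ itself via Radford's trace criterion), and your final paragraph deducing the group-algebra consequence via Etingof--Gelaki lifting and Theorem \ref{T:Z} matches the paper. The problem is the bound $p>4q$: your argument splits on the parity of $D_0=\dim P(\kk)$, completes only the case $D_0$ odd, and for $D_0$ even you offer a plan (``run the fusion-rule analysis \dots\ should pin the dimension set $\Gamma(H)$ down to finitely many possibilities'') rather than a proof. That case analysis is exactly where your argument fails to close, and it is not clear it ever would: the fusion-rule bookkeeping of Lemma \ref{IrrNumber} was already delicate with three simples, and with $|\Irr(H)|\ge 4$ and no a priori bound on $|\Irr(H)|$ there is no finite list of configurations to check. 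So this is a genuine gap, not a matter of detail.

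The single missing idea is the \emph{reverse} companion to your ingredient \emph{(i)}: besides $P(\kk)$ being a summand of $U^*\otimes P(U)$ for every simple $U$, one also has that $P(V)$ is a direct summand of $V\otimes P(\kk)$ (again by \eqref{PM}, since $[V^*\otimes V:\kk]\ge 1$), whence $\dim P(\kk)\ge \dim P(V)/\dim V$. With $V$ the $**$-self-dual simple from Lemma \ref{L:2} (so $\dim V$ odd and, by Lemma \ref{L:3} applied to $P(V)\cong P(V)^{**}$, $\dim P(V)\ge q$), equation \eqref{Dec} together with your \emph{(i)} and \emph{(iii)} gives
\begin{align*}
p \;\ge\; 3\dim P(\kk)+\dim V\cdot \dim P(V)\;\ge\; \frac{3q}{\dim V}+q\dim V\;=\;q\left(\frac{3}{\dim V}+\dim V\right)\;\ge\; 4q,
\end{align*}
where the last inequality uses that $\dim V$ is odd (the function $x\mapsto 3/x+x$ takes value $4$ at $x=1$ and $x=3$ and exceeds $4$ for odd $x\ge 5$); primality of $p$ then upgrades $p\ge 4q$ to $p>4q$. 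This single inequality makes the bound uniform in $D_0$ and eliminates your parity cases entirely, which is how the paper's proof proceeds.
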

\begin{proof}
Suppose $H$ is nonsemisimple. Then $S^2\neq \id$, otherwise $\text{tr}(S^2)=\dim H\neq 0$ in $\kk$, which implies that $H$ is semisimple by \cite[Proposition 2(c)]{Rad2}. By Lemma \ref{l:crucial}, $H$ and $H^*$ has no nontrivial group-like element. Therefore, $S^4=\id$. Since $S^2 \neq \id$, $\ord(S^2)=2$.

 If $q=2$, then the eigenvalues of $S^2$ can only be 1 and so $\Tr(S^2) = \dim H \ne 0 \pmod{2}$. Therefore, $q > 2$. By Lemma \ref{L:2}, there exists $V\in \Irr(H)$ such that both $V$ and $P(V)$ are of odd dimensions and $V^{**}\cong V$ as $H$-modules. Since $P(V)\cong P(V^{**})\cong P(V)^{**}$, $\dim P(V)\ge q$ by Lemma \ref{L:3}.

 Let $r=|\text{Irr}(H)|$. Then $r \ge 4$ by Lemma \ref{IrrNumber}. Recall from \eqref{PM} that $P(\kk)$ is a summand of $U^* \o P(U)$ for any $U \in \Irr(H)$. Therefore, $\dim U \cdot \dim P(U) \ge \dim P(\kk)$, and we get
 \begin{align*}
p\overset{\eqref{Dec}}{=}&\sum_{U\in \mathrm{Irr}(H)\setminus\{V\}} \dim (U^*\otimes P(U))+\dim V \cdot \dim P(V)\\\overset{\eqref{PM}}{\ge} & (r-1)\dim P(\kk)+\dim V\cdot \dim P(V) \geq
3 \dim P(\kk)+ q \dim V \,.
\end{align*}
Since $P(V)$ is a direct summand of $V \o P(\kk)$, we have $\dim V\cdot \dim P(\kk) \ge \dim P(V)$.  Since $\dim V$ is odd, the preceding inequality becomes
$$
p \ge q \cdot \left(\frac{3}{\dim V}+\dim V\right) \ge 4q\,.
$$
Since $p$ is a prime, $p > 4q$. This completes the proof of the first assertion.

If $q=2$ or $p<4q$, then $H$ and $H^*$ are semisimple by the first statement. Let $\mathcal O$ be the ring of Witt vectors of $\kk$ and $\mathcal K$ the field of fractions of $\mathcal O$. By the Lifting Theorem \cite[Theorem 2.1]{EGelaki}, we can construct a Hopf algebra $\overline{H}$ over $\mathcal O$ which is free of rank $p$ over $\mathcal O$ satisfying $H=\overline{H}/q\overline{H}$. Then $H_0=\overline{H}\otimes_\mathcal O \mathcal K$ is a $p$-dimensional Hopf algebra over $\mathcal K$. Since $\mathcal K$ is of characteristic 0, it follows from Theorem \ref{T:Z} that $\overline{H}\subset H_0$ are all cocommutative. Therefore, $H=\overline{H}/q\overline{H}$ is also cocommutative, and hence a group algebra since $\kk$ is algebraically closed.

\end{proof}

\begin{cor}
Let $H$ be a Hopf algebra of prime dimension $p$ over a field $\kk$ with positive characteristic $q$. Then $H$ is commutative and cocommutative when $q=2$ or $p<4q$.
\end{cor}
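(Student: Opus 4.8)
The plan is to deduce the statement from the algebraically closed cases already treated in Theorems~\ref{T:Main} and~\ref{P:M} by passing to the algebraic closure and then descending. Let $\bar\kk$ be an algebraic closure of $\kk$ and put $\bar H=H\otimes_\kk\bar\kk$, with the structure maps obtained by extension of scalars. Then $\bar H$ is a Hopf algebra over the algebraically closed field $\bar\kk$, $\dim_{\bar\kk}\bar H=\dim_\kk H=p$ is prime, and $\Chara\bar\kk=q$; moreover the numerical hypothesis $q=2$ or $p<4q$ is inherited unchanged.

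First I would establish that $\bar H$ is commutative and cocommutative, splitting into three cases according to the size of $q$ relative to $p$. If $q=p$ (which lies in the range $p<4q$, as $p<4p$), then Theorem~\ref{T:Main} identifies $\bar H$ with one of the three algebras listed there, each of which is commutative, being singly generated, and cocommutative, its generator being grouplike or primitive. If $q<p$, then the hypothesis $q=2$ or $p<4q$ puts us in the scope of Theorem~\ref{P:M}, which forces $\bar H$ to be a group algebra. If $q>p$, then $p<q<4q$ automatically, and the $q>p$ case of the Etingof--Gelaki Lifting Theorem recalled in the Introduction (cf.~\cite[Corollary~3.5]{EGelaki}) again shows $\bar H$ to be a group algebra. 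In all three cases $\bar H$ is both commutative and cocommutative.

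It remains to descend these properties from $\bar H$ to $H$. Writing $\tau$ for the tensor-flip, commutativity of $H$ is the vanishing of $m_H-m_H\circ\tau\colon H\otimes_\kk H\to H$ and cocommutativity is the vanishing of $\Delta_H-\tau\circ\Delta_H\colon H\to H\otimes_\kk H$. Under the canonical identification $\bar H\otimes_{\bar\kk}\bar H\cong(H\otimes_\kk H)\otimes_\kk\bar\kk$, the multiplication and comultiplication of $\bar H$ are exactly $m_H\otimes\id_{\bar\kk}$ and $\Delta_H\otimes\id_{\bar\kk}$, so commutativity and cocommutativity of $\bar H$ say precisely that the above two $\kk$-linear maps become zero after the base change $-\otimes_\kk\bar\kk$. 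Since a field extension is faithfully flat, a $\kk$-linear map vanishes as soon as its extension of scalars does; hence both maps already vanish over $\kk$, and $H$ is commutative and cocommutative.

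The base change and the descent are routine and depend only on the faithful flatness of the field extension $\kk\subset\bar\kk$. The point that must not be overlooked is the case $q>p$: it falls outside the hypothesis of Theorem~\ref{P:M}, which assumes $q<p$, so it has to be supplied separately from the Lifting Theorem, as anticipated in the Introduction. I expect this case-bookkeeping, rather than any genuine difficulty, to be the only subtlety.
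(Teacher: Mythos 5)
Your proposal is correct and follows essentially the same route as the paper: the paper likewise disposes of the case $q>p$ by citing \cite[Corollary 3.5]{EGelaki}, and deduces the remaining cases ($q=2$ or $1/4<q/p\le 1$) from Theorems~\ref{T:Main} and~\ref{P:M} ``by an algebraically closed field extension.'' You simply spell out the base-change and faithfully flat descent details that the paper leaves implicit.
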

\begin{proof}
The case when $q>p$ is proved by Etingof and Gelaki in \cite[Corollary 3.5]{EGelaki}. It remains to verify for $q=2$ or $1/4<q/p\le 1$, which follows from Theorem \ref{T:Main} and Theorem \ref{P:M} by an algebraically closed field extension.
\end{proof}

\noindent\textbf{Acknowledgement.}
Part of this research work was done during the second author's visit to the Department of Mathematics at Louisiana State University in December 2015. He is grateful for the first author's invitation and wishes to thank Louisiana State University for its hospitality. The authors would also like to thank Pavel Etingof for his suggestion on improving the bound in Theorem \ref{P:M}, and the referee for his/her helpful comments.

\end{document}